\definecolor{darkgreen}{rgb}{0.0, 0.63, 0.0}
\theoremstyle{definition}
\theoremstyle{lemma}
\newtheorem{theorem}{Theorem}
\newtheorem{lemma}[theorem]{Lemma}
\DeclareMathOperator{\Log}{Log}
\newcommand{\vol}{\text{Vol}}
\newcommand{\norm}{\text{Nm}}
\newcommand{\trace}{\text{Tr}}
\begin{document}
\title{An Upper Bound on the Number of Classes of Perfect Unary Forms in Totally Real Number Fields}
\author{Christian Porter\footnote{Imperial College London, United Kingdom. Corresponding author, c.porter17@imperial.ac.uk}, Andrew Mendelsohn\footnote{Imperial College London, United Kingdom. andrew.mendelsohn18@imperial.ac.uk}}
\maketitle
\begin{abstract}
    Let $K$ be a totally real number field of degree $n$ over $\mathbb{Q}$, with discriminant and regulator $\Delta_K, R_K$ respectively. In this paper, using a similar method to van Woerden, we prove that the number of classes of perfect unary forms, up to equivalence and scaling, can be bounded above by $O( \Delta_K \exp(2n \log(n)+f(n,R_K)))$, where $f(n,R_K)$ is a finite value, satisfying $f(n,R_K)=\frac{\sqrt{n-1}}{2}R_K^{\frac{1}{n-1}}+\frac{4}{n-1}\log(\sqrt{|\Delta_K|})^2$ if $n \leq 11$. Moreover, if $K$ is a unit reducible field, the number of classes of perfect unary forms is bound above by $O( \Delta_K \exp(2n \log(n)))$.
\end{abstract}
\section{Introduction}
Let $K$ be a totally real algebraic number field of degree $n$ over $\mathbb{Q}$, with ring of integers $\mathcal{O}_K$. We associate to $K$ the embeddings $\sigma_1,\sigma_2,\dots,\sigma_n$ into $\mathbb{R}$, that necessarily fix $\mathbb{Q}$. A quadratic form $f:K^m \to K$ is defined by
\begin{align*}
f(x_1,\dots,x_m)=\sum_{k,l=1}^m f_{kl}x_kx_l.
\end{align*}
We say that $f$ is positive-definite if
\begin{align*}
\sigma_i(f)(x_1,\dots,x_m)=\sum_{k,l=1}^m \sigma_i(f_{k,l})x_kx_l
\end{align*}
is positive definite for each $i=1,\dots,n$.

Quadratic forms of the form $ax^2$, where $a \in K$, are called unary forms. We say that $a$ is a totally positive element of $K$ if the form $ax^2$ is positive definite, and we denote $K_{>>0}$ the set of totally positive elements of $K$. Note that for all $a \in K_{>>0}$, $\trace_{K/\mathbb{Q}}(ax^2)$ corresponds to a positive-definite quadratic form in $n$ variables with rational coefficients. This is known as the trace-form of the unary form $ax^2$. For each $a \in K_{>>0}$, we will use the notation
\begin{align*}
\mu(a)=\min_{x \in \mathcal{O}_K \setminus \{0\}} \trace_{K/\mathbb{Q}}(ax^2)
\end{align*}
and
\begin{align*}
\mathcal{M}(a)=\{x \in \mathcal{O}_K: \trace_{K/\mathbb{Q}}(ax^2)=\mu(a)\}.
\end{align*}
If $a^\prime= au^2$ for some $u \in \mathcal{O}_K^\times$, we say that the unary form $a^\prime x^2$ is equivalent to the form $ax^2$, and we denote this relation by $a \sim a^\prime$. Note that since the action of $\mathcal{O}_K^\times$ fixes $\mathcal{O}_K$, we have $\mu(a)=\mu(a^\prime)$, $|\mathcal{M}(a)|=|\mathcal{M}(a^\prime)|$ and each element of $\mathcal{M}(a)$ is equivalent to an element of $\mathcal{M}(a^\prime)$, and vice versa. Finally, we say that $K$ is a $A$-reducible if for every $a \in K_{>>0}$, if $x \in \mathcal{M}(a)$ then $|\norm_{K/\mathbb{Q}}(x)| \leq A$. In particular, if $A=1$ we say that $K$ is a unit reducible field.

A unary form $ax^2$ is said to be perfect if it is uniquely determined by $\mu(a)$ and $\mathcal{M}(a)$. It is immediately clear that if $ax^2$ is a perfect unary form, then any unary form of the type $\lambda a^\prime x^2$ is a perfect unary form, where $\lambda \in \mathbb{Q}^+$ and $a^\prime \sim a$. In this way, we may classify perfect unary forms up to scaling and equivalence, or in other words, by their homothety classes. It is known that for number fields of fixed dimension, the number of classes of perfect forms can grow arbitrarily large (see e.g. \cite{Y}, in which it is shown that the number of classes of perfect unary forms for real quadratic fields can become arbitrarily large).

In this paper, we follow a similar argument to van Woerden in \cite{V} in order to determine an upper bound on the number of homothety classes of perfect unary forms for an arbitrary totally real number field. Our result is stated as follows.
\begin{theorem}\label{thm1}
Let $K$ be a totally real number field with field discriminant $\Delta_K$, regulator $R_K$ and degree $n$ over $\mathbb{Q}$. Let $n_K$ denote the number of homothety classes of perfect unary forms in $K$. Then
\begin{align*}
n_K \leq e^{\sqrt{\eta_K^2+\theta_K}} |\Delta_K| \left(\frac{2}{\pi}\right)^{2n} \Gamma\left(2+\frac{n}{2}\right)^4,
\end{align*}
where
\begin{itemize}
    \item $\eta_K=0$ if $K$ is a unit reducible field,
    \item $\eta_K=\frac{\sqrt{n-1}}{2} R_K^{\frac{1}{n}}$ if $K$ is not unit reducible and $2 \leq n \leq 11$,
    \item $\eta_K=\frac{\sqrt{n-1}}{2} \left(\frac{2}{\pi}\right)^{n-1}\Gamma\left(2+\frac{n-1}{2}\right)^2\left(\sqrt{\frac{2}{n}}\left(\frac{1}{1000}\left(\frac{\log \log n}{\log n}\right)^3\right)\right)^{2-n}R_K$ otherwise,
\end{itemize}
and
\begin{itemize}
\item $\theta_K=\frac{4\log(A)^2}{n-1}$, if $K$ is $A$-reducible.
\end{itemize}

\end{theorem}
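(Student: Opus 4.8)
The plan is to imitate van Woerden's argument from \cite{V} for perfect real quadratic forms, in the setting where the ambient lattice is always $\mathcal{O}_K$ and only the form $b\mapsto\trace(b\,\cdot\,)$ varies over the totally positive cone of $K\otimes\mathbb{R}$. First I would fix a canonical representative in each homothety class. Since $\mu(a)\in\mathbb{Q}_{>0}$, dividing by $\mu(a)$ lets me assume $\mu(a)=1$, after which the class determines $a$ only up to the action $a\mapsto au^2$, $u\in\mathcal{O}_K^\times$. I would spend this remaining freedom to \emph{balance} $a$: the vector $(\log\sigma_1(a),\dots,\log\sigma_n(a))$ lies on the hyperplane $\sum_it_i=\log\norm(a)$, and I would translate it by an element of the doubled unit (Dirichlet) lattice $2\Lambda_K$ so that it lands near the balanced point $\tfrac1n\log\norm(a)\cdot(1,\dots,1)$. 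The resulting balancing error is at most the covering radius of $2\Lambda_K$, which one controls from its covolume (a fixed multiple of $R_K$) together with a lower bound on its shortest nonzero vector, via the transference inequality $\mu(\Lambda)\le\tfrac{\sqrt{n-1}}2\lambda_{n-1}(\Lambda)$ and Minkowski's second theorem; for small $n$ one invokes the known explicit estimates, and for general $n$ Dobrowolski's theorem ($\lambda_1(\Lambda_K)\gg(\log\log n/\log n)^3$). This is precisely what produces the three cases of $\eta_K$. For a unit reducible field some minimal vector is a unit, so after translating one may assume $1\in\mathcal{M}(a)$ and hence $\trace(a)=1$, with no balancing error, which is why $\eta_K=0$ there.

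Next I would record the size estimates for a balanced representative $a$. The arithmetic--geometric mean inequality gives $\trace(ax^2)\ge n\,\norm(a)^{1/n}|\norm(x)|^{2/n}\ge n\,\norm(a)^{1/n}$ for nonzero $x\in\mathcal{O}_K$, hence $\norm(a)\le n^{-n}$; and since the trace form of $ax^2$ has Gram determinant $|\Delta_K|\norm(a)$, Blichfeldt's bound on Hermite's constant gives $\norm(a)\ge(\pi/2)^n\Gamma(2+n/2)^{-2}|\Delta_K|^{-1}$ up to the precise constant. For a minimal vector $x$, the identity $\sum_i\sigma_i(a)\sigma_i(x)^2=1$ together with the near-balancing of the $\sigma_i(a)$ bounds the trace-zero part of the logarithmic vector of $x$ in terms of the same unit-lattice covering radius that governs $\eta_K$, while the component of that vector along $(1,\dots,1)$ equals $\tfrac1{\sqrt n}\log|\norm(x)|$, which the $A$-reducibility bounds in terms of $\log A$. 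As these two contributions are orthogonal, Pythagoras and exponentiation put every minimal vector of a balanced perfect form in the ball $B_R$ with $R^2$ a fixed multiple of $|\Delta_K|^{1/n}\exp\!\bigl(\sqrt{\eta_K^2+\theta_K}\bigr)$, where $\theta_K=4\log(A)^2/(n-1)$: the factor $4$ comes from replacing $\log x$ by $\log x^2$ and the $n-1$ from working in the trace-zero hyperplane. A preliminary lemma is needed here: every totally real $K$ is $A$-reducible for some $A$ bounded in terms of $R_K$ (again by balancing), so that $\theta_K$ is finite in all cases.

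For the count I would use that perfection of $ax^2$ forces $\{x^2:x\in\mathcal{M}(a)\}$ to span $K$ over $\mathbb{Q}$; choosing $x_1,\dots,x_n\in\mathcal{M}(a)$ with $\{x_1^2,\dots,x_n^2\}$ a $\mathbb{Q}$-basis, the element $a$ is the unique solution of the linear system $\trace(ax_i^2)=1$. Hence $a$ is recovered, up to finitely many choices, from the full-rank sublattice $M=\sum_i\mathbb{Z}\,x_i^2\subseteq\mathcal{O}_K$ and the datum of which basis of $M$ is used. Since $\bigl(\sigma_j(a)\sigma_j(x_i^2)\bigr)_{i,j}$ is row-stochastic, $\bigl|\det\!\bigl(\sigma_j(x_i^2)\bigr)_{i,j}\bigr|\le\norm(a)^{-1}$, which bounds $[\mathcal{O}_K:M]$; combining this with the bound on minimal vectors and the classical count of sublattices of $\mathbb{Z}^n$ of bounded index (dominated by the index to the $n$-th power), and collecting all archimedean constants through Blichfeldt's inequality, should yield a bound of the asserted shape $e^{\sqrt{\eta_K^2+\theta_K}}\,|\Delta_K|\,(2/\pi)^{2n}\,\Gamma(2+n/2)^4$.

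I expect this last step to be the genuine obstacle: obtaining the \emph{linear} dependence on $|\Delta_K|$ rather than a larger power, while keeping only $\exp(O(n\log n))$ in the dimension. The naive sublattice count above is too lossy for that, so one probably needs either a sharper choice of the spanning minimal vectors $x_i$ — using that $|\norm(x_i)|\ge1$ keeps their embeddings away from any single archimedean axis, so the lattice they generate cannot be too thin and its index is correspondingly controlled — or a direct vertex/facet analysis of the Ryshkov-type polyhedron $\mathcal{R}=\{b:\sigma_i(b)\ge0\text{ for all }i\text{ and }\trace(bx^2)\ge1\text{ for all }x\in\mathcal{O}_K\setminus\{0\}\}$, whose vertices modulo $\mathcal{O}_K^\times$ are exactly the homothety classes being counted. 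A further point needing care is the unit-action bookkeeping: one must check that the finitely many ambiguities introduced by balancing and by the choice of basis of $M$ do not inflate the count past the stated constants, and that the $n\le 11$ versus general-$n$ split in $\eta_K$ is carried through consistently.
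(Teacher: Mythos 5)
Your preparatory steps do track the paper's: balancing a representative of each class via the covering radius of the log-unit lattice, with the explicit small-rank covering bound for $n\le 11$ and the Kessler/Dobrowolski-type lower bound on $\lambda_1$ combined with Minkowski and Blichfeldt for general $n$, is exactly Lemma \ref{lem1}, and the trace-zero versus all-ones decomposition with $A$-reducibility giving $\theta_K=4\log(A)^2/(n-1)$ is the same Pythagoras argument; your bound on minimal vectors of a balanced form corresponds to Lemma \ref{lem2} (the paper gets it from $\mu(a)\mu(a^{-1})\le \frac{4}{\pi^2}\Gamma(2+\frac n2)^{4/n}|\Delta_K|^{2/n}$ plus an eigenvalue comparison, rather than your direct AM--GM route, but the content is the same). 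The finiteness of $A$ for every totally real $K$ is likewise in the paper, as the lemma feeding Theorem \ref{thm2}.

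The genuine gap is the counting step, and you concede it yourself. The paper does not count sublattices at all: it invokes Koecher's theorem that the Voronoi cones $\mathcal{V}(a)$ of the perfect unary forms cover the fundamental region $\mathcal{F}_K$ of $K_{>>0}$ modulo squares of units, proves that the interiors of $\mathcal{V}^1(a)=\mathcal{V}(a)\cap K_{>>0}^1$ attached to inequivalent perfect forms are disjoint, and thereby reduces the theorem to a volume ratio $n_K\le \vol(\mathcal{F}_K^1)/\min_{a\in P_K}\vol(\mathcal{V}^1(a))$. The numerator is at most $\vol(T_n)=1/n!$ because the canonical embedding sends $K_{>>0}^1$ into the simplex $\{x\in{\mathbb{R}^+}^n:|x|_1\le 1\}$; the denominator is at least the volume $\frac1{n!}|\det U|$ of the simplex on $0$ and the normalized points $\sigma(x_i^2)/\trace_{K/\mathbb{Q}}(x_i^2)$ for $n$ minimal vectors whose squares span $K$, and $|\det U|$ is bounded below by a discriminant term divided by $\prod_i\trace_{K/\mathbb{Q}}(x_i^2)$, each factor of which is controlled by Lemma \ref{lem2}. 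It is precisely this packing argument that yields the linear factor $|\Delta_K|$ together with only $(2/\pi)^{2n}\Gamma(2+\frac n2)^4=e^{O(n\log n)}$ in the dimension. Your substitute --- recovering $a$ from the sublattice $M=\sum_i\mathbb{Z}x_i^2$ and counting sublattices of bounded index, times choices of basis --- is, as you note, too lossy: the index bound involves $\norm_{K/\mathbb{Q}}(a)^{-1}$, which is of size polynomial in $|\Delta_K|$ and $n^{O(n)}$, and the number of sublattices of index at most $N$ grows polynomially in $N$, so this route cannot reach the stated dependence on $|\Delta_K|$. The Ryshkov-polyhedron vertex analysis you mention as an alternative is in effect the same Voronoi/Koecher machinery, but you do not carry it out; without it the theorem's bound is not established.
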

We will also prove that any totally real field $K$ is $A$-reducible, where
\begin{align*}
A \leq n^{-\frac{n}{2}} \sqrt{|\Delta_K|} \left(\frac{2}{\pi}\right)^{\frac{n}{2}}\Gamma\left(2+\frac{n}{2}\right).
\end{align*}
This leads us to the following more general theorem.
\begin{theorem}\label{thm2}
Let $K$ be a totally real number field with field discriminant $\Delta_K$, regulator $R_K$ and degree $n$ over $\mathbb{Q}$. Let $n_K$ denote the number of homothety classes of perfect unary forms in $K$. Then
\begin{align*}
n_K \leq e^{\sqrt{\eta_K^2+\rho_K}}|\Delta_K| \left(\frac{2}{\pi}\right)^{2n} \Gamma\left(2+\frac{n}{2}\right)^4,
\end{align*}
where $\eta_K$ is as defined before, and
\begin{itemize}
\item $\rho_K=0$ if $K$ is unit reducible,
\item $\rho_K=\frac{4}{n-1}\log\left(n^{-\frac{n}{2}} \sqrt{|\Delta_K|} \left(\frac{2}{\pi}\right)^{\frac{n}{2}}\Gamma\left(2+\frac{n}{2}\right)\right)^2 \approx \frac{4}{n-1}\log(\sqrt{|\Delta_K|})^2$ otherwise.
\end{itemize}
\end{theorem}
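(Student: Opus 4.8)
The plan is to obtain Theorem~\ref{thm2} as a corollary of Theorem~\ref{thm1}: once I have shown that every totally real $K$ is $A$-reducible for $A_0 := n^{-n/2}\sqrt{|\Delta_K|}(2/\pi)^{n/2}\Gamma(2+\tfrac n2)$, the statement follows by substituting $A=A_0$ into the definition of $\theta_K$ and checking that the result is exactly $\rho_K$. So essentially all of the work is in the uniform $A$-reducibility bound, and the final step is bookkeeping.

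To prove the $A$-reducibility bound I would fix $a\in K_{>>0}$ and realise the trace form $x\mapsto\trace_{K/\mathbb{Q}}(ax^2)$ as the squared Euclidean norm on the full-rank lattice $\Lambda_a\subset\mathbb{R}^n$ obtained by applying $x\mapsto(\sqrt{\sigma_1(a)}\,\sigma_1(x),\dots,\sqrt{\sigma_n(a)}\,\sigma_n(x))$ to $\mathcal{O}_K$; this uses that $K$ is totally real, so every $\sigma_i(a)>0$. Then $\mu(a)=\lambda_1(\Lambda_a)^2$, and the Gram matrix $(\trace_{K/\mathbb{Q}}(a\omega_i\omega_j))$ in an integral basis $\omega_1,\dots,\omega_n$ has determinant $\norm_{K/\mathbb{Q}}(a)\det(\sigma_i(\omega_j))^2=\norm_{K/\mathbb{Q}}(a)\,|\Delta_K|$, so $\operatorname{covol}(\Lambda_a)=\sqrt{\norm_{K/\mathbb{Q}}(a)\,|\Delta_K|}$. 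Feeding this into the bound on Hermite's constant $\gamma_n\le\frac{2}{\pi}\Gamma(2+\tfrac n2)^{2/n}$ gives $\mu(a)\le\frac{2}{\pi}\Gamma(2+\tfrac n2)^{2/n}\bigl(\norm_{K/\mathbb{Q}}(a)\,|\Delta_K|\bigr)^{1/n}$. On the other hand, for $x\in\mathcal{M}(a)$ the AM--GM inequality applied to the positive reals $\sigma_i(a)\sigma_i(x)^2$ yields $\mu(a)=\sum_i\sigma_i(a)\sigma_i(x)^2\ge n\,\norm_{K/\mathbb{Q}}(a)^{1/n}\,|\norm_{K/\mathbb{Q}}(x)|^{2/n}$. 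Cancelling $\norm_{K/\mathbb{Q}}(a)^{1/n}$ from the two estimates and raising to the power $n/2$ leaves $|\norm_{K/\mathbb{Q}}(x)|\le\bigl(\tfrac{2}{n\pi}\bigr)^{n/2}\Gamma(2+\tfrac n2)\sqrt{|\Delta_K|}=A_0$, as wanted, so $K$ is $A_0$-reducible.

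For the final assembly I would split into cases. If $K$ is unit reducible, then Theorem~\ref{thm1} applies with $\eta_K=0$ and $\theta_K=0$, which is exactly Theorem~\ref{thm2} with $\rho_K=0$. If $K$ is not unit reducible, then by the previous paragraph $K$ is $A_0$-reducible, so Theorem~\ref{thm1} applies with $\theta_K=\frac{4\log(A_0)^2}{n-1}=\frac{4}{n-1}\log\!\bigl(n^{-n/2}\sqrt{|\Delta_K|}(2/\pi)^{n/2}\Gamma(2+\tfrac n2)\bigr)^2$, which is precisely $\rho_K$; since the term $\eta_K$ is unchanged between the two theorems, the two bounds coincide. (I would also remark that $A_0\ge1$, which holds since $\sqrt{|\Delta_K|}$ is bounded below by Minkowski's bound, so that $\log(A_0)^2\ge0$ is the correct exponent.)

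The main obstacle I anticipate is the upper bound on $\mu(a)$: a naive appeal to Minkowski's convex body theorem only gives $\mu(a)\le\frac{4}{\pi}\Gamma(1+\tfrac n2)^{2/n}\bigl(\norm_{K/\mathbb{Q}}(a)\,|\Delta_K|\bigr)^{1/n}$, which is too weak to land on the stated constant for $n\ge 4$, so I would need the sharper Blichfeldt-type bound on $\gamma_n$ (or an equivalent lattice-packing estimate). The remaining points—computing the Gram determinant as $\norm_{K/\mathbb{Q}}(a)\,|\Delta_K|$ and tracking where positivity of $\norm_{K/\mathbb{Q}}(a)$ and $\Delta_K$ (hence totally realness) is used—are routine.
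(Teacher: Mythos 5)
Your proposal is correct and takes essentially the same approach as the paper: Theorem~\ref{thm2} is deduced from Theorem~\ref{thm1} by proving every totally real $K$ is $A_0$-reducible with $A_0=n^{-n/2}\sqrt{|\Delta_K|}(2/\pi)^{n/2}\Gamma(2+\tfrac n2)$, using the same combination of the AM--GM inequality on $\norm_{K/\mathbb{Q}}(ax^2)$ and the Blichfeldt bound on Hermite's constant, then substituting $A_0$ into $\theta_K$. The only cosmetic difference is that you derive $\mu(a)\le\gamma_n\bigl(\norm_{K/\mathbb{Q}}(a)\,|\Delta_K|\bigr)^{1/n}$ directly from the twisted-embedding lattice of covolume $\sqrt{\norm_{K/\mathbb{Q}}(a)\,|\Delta_K|}$, where the paper instead cites Leibak's bound $\gamma_K\le\gamma_n|\Delta_K|^{1/n}$ on the algebraic Hermite constant.
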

\section{Proof of Theorem \ref{thm1}}
We will define the set $K_{>>0}^{1}$ as the subset of elements in $K_{>>0}$ such that $\mu(a) = 1$. Moreover, denote $\mathcal{F}_K$ the fundamental region of $K_{>>0}$, that is, the set of elements such that for all $a \in K_{>>0}$, there exists an $a^\prime \in \mathcal{F}_K$ such that $a \sim a^\prime$. For any perfect unary form $ax^2$, we will use the notation
\begin{align*}
\mathcal{V}(a)=\left\{\sum_{x \in \mathcal{M}(a)} \lambda_i x_i^2: \lambda_i \in \mathbb{Q}^+\right\},
\end{align*}
which is called the Voronoi cone of $a$. Denote $P_K$ the set of $a \in K_{>>0}$ such that $ax^2$ determine all the distinct homothety classes of perfect unary forms. It was proven by the work of Koecher \cite{K} that
\begin{align*}
\mathcal{F}_K=\bigcup_{a \in P_K} \mathcal{V}(a),
\end{align*}
From now on, we will use the notation $\mathcal{V}^1(a)=\mathcal{V}(a) \cap K_{>>0}^1$ for any perfect unary form $ax^2$. First, we want to prove the following useful lemma.
\begin{lemma}
Suppose that $ax^2$ and $bx^2$ are perfect unary forms in $K$, and $a,b$ belong to distinct homothety classes. Then $\text{Int}(\mathcal{V}^1(a)) \cap \text{Int}(\mathcal{V}^1(b))=\phi$, where Int means the interior of the cone generated by $\mathcal{V}(a)$ or $\mathcal{V}(b)$.
\end{lemma}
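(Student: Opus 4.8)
The plan is to argue by contradiction: if some $g$ lies in the interior of both cones $\mathcal{V}(a)$ and $\mathcal{V}(b)$, I will show that $a$ and $b$ must be positive rational multiples of one another, hence in the same homothety class. The device is the trace pairing $\langle u,v\rangle = \trace_{K/\mathbb{Q}}(uv)$ on $K$: every $g$ defines a linear functional $h \mapsto \trace_{K/\mathbb{Q}}(gh)$, and I will see that when $g \in \mathrm{Int}(\mathcal{V}(a))$ this functional, minimised over the ``Ryshkov region'' $\mathcal{R}_K = \{h \in K_{>>0} : \trace_{K/\mathbb{Q}}(hx^2) \ge 1 \text{ for all } x \in \mathcal{O}_K \setminus \{0\}\}$, singles out the perfect form $a$ (rescaled to minimum $1$) uniquely.

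First I would record two structural facts. \emph{(i) Perfection in linear terms:} $ax^2$ is perfect if and only if $\{x^2 : x \in \mathcal{M}(a)\}$ spans $K$ as a $\mathbb{Q}$-vector space. For one direction: if $\mu(b)=\mu(a)$ and $\mathcal{M}(b)=\mathcal{M}(a)$ then $\trace_{K/\mathbb{Q}}((a-b)x^2)=0$ for all $x \in \mathcal{M}(a)$, so spanning plus nondegeneracy of the trace form forces $a=b$; for the other, if the span is a proper subspace one perturbs $a$ by a small positive multiple of a nonzero trace-orthogonal element $c$, and since $\trace_{K/\mathbb{Q}}(ax^2)$ grows quadratically in $x$ one checks, via a uniform estimate, that for small parameter $\mu$ and $\mathcal{M}$ are unchanged, so $a$ is not perfect. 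In particular, for $a$ perfect the cone $\mathcal{V}(a) = \mathrm{cone}\{x^2 : x \in \mathcal{M}(a)\}$ is full-dimensional in $K \otimes \mathbb{R}$. \emph{(ii) Interior of a finitely generated cone:} a point of a full-dimensional cone $\mathrm{cone}\{v_1,\dots,v_k\}$ lies in its interior precisely when it admits a representation $\sum_i \lambda_i v_i$ with every $\lambda_i > 0$; combined with (i), $g \in \mathrm{Int}(\mathcal{V}(a))$ if and only if $g = \sum_{x \in \mathcal{M}(a)} \lambda_x x^2$ with all $\lambda_x > 0$.

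Next I would establish the core claim: if $a$ is perfect and $g = \sum_{x \in \mathcal{M}(a)} \lambda_x x^2$ with all $\lambda_x > 0$, then $\bar a = a/\mu(a)$ is the \emph{unique} minimiser of $h \mapsto \trace_{K/\mathbb{Q}}(gh)$ over $\mathcal{R}_K$. The value at $\bar a$ is $\sum_x \lambda_x \trace_{K/\mathbb{Q}}(x^2 \bar a) = \sum_x \lambda_x$, since each $x \in \mathcal{M}(a) = \mathcal{M}(\bar a)$ and $\mu(\bar a) = 1$; and for any $h \in \mathcal{R}_K$ we have $\trace_{K/\mathbb{Q}}(gh) = \sum_x \lambda_x \trace_{K/\mathbb{Q}}(x^2 h) \ge \sum_x \lambda_x$ because $\trace_{K/\mathbb{Q}}(x^2 h) \ge 1$ by definition of $\mathcal{R}_K$. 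As all $\lambda_x > 0$, equality forces $\trace_{K/\mathbb{Q}}(x^2 h) = 1$ for every $x \in \mathcal{M}(a)$, and by the spanning property in (i) this affine system has $\bar a$ as its only solution in $K$, so $h = \bar a$.

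Finally I would assemble the contradiction. Suppose $g$ lies in the interior of $\mathcal{V}(a)$ and in the interior of $\mathcal{V}(b)$. Applying the core claim to $a$, and then to $b$, both $a/\mu(a)$ and $b/\mu(b)$ are the unique minimiser of $\trace_{K/\mathbb{Q}}(g\,\cdot\,)$ over $\mathcal{R}_K$, whence $a/\mu(a) = b/\mu(b)$. Since $\mu(a),\mu(b) \in \mathbb{Q}^+$, this exhibits $b$ as a positive rational multiple of $a$, so $ax^2$ and $bx^2$ lie in the same homothety class, contrary to hypothesis; hence no such $g$ exists. The only genuinely delicate point I anticipate is fact (i) --- specifically, the uniform bound ensuring the perturbation argument respects the infinitely many constraints $\trace_{K/\mathbb{Q}}(hx^2) \ge \mu(a)$ simultaneously --- together with checking that an interior point of $\mathcal{V}(a)$ really does have a strictly positive representation; the remainder is routine manipulation of the trace pairing and convex cones.
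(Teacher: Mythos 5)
Your argument is correct, but it reaches the conclusion by a slightly different route than the paper, and it carries one extra load the paper avoids. The central computation is the same in both: pair the common interior point $g=\sum_{x\in\mathcal{M}(a)}\lambda_x x^2$ (all $\lambda_x>0$) against the forms via the trace, and use $\trace_{K/\mathbb{Q}}(hx^2)\geq 1$ together with strict positivity of the $\lambda_x$ to force equality constraints $\trace_{K/\mathbb{Q}}(hx^2)=1$ on all of $\mathcal{M}(a)$. The paper then finishes very cheaply: doing this symmetrically for $a$ and $b$ gives $\trace_{K/\mathbb{Q}}(ac)=\trace_{K/\mathbb{Q}}(bc)$, hence $\mathcal{M}(a)=\mathcal{M}(b)$ and $\mu(a)=\mu(b)$, and then the bare definition of perfection (``uniquely determined by $\mu$ and $\mathcal{M}$'') yields $a=b$; no structural characterization of perfection is needed. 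You instead package the computation as ``the normalized form $a/\mu(a)$ is the unique minimizer of $\trace_{K/\mathbb{Q}}(g\,\cdot\,)$ over the Ryshkov region,'' which forces you to invoke the equivalence \emph{perfect $\Leftrightarrow$ $\{x^2: x\in\mathcal{M}(a)\}$ spans $K$ over $\mathbb{Q}$}, and hence the perturbation lemma (with its uniform estimate over the infinitely many constraints) that you only sketch. That sketch is the standard Voronoi/Koecher argument and is fine in outline, so I see no genuine gap; but be aware it is strictly more than the lemma requires, since with the paper's definition one never needs to know that the minimal vectors span. What your framing buys in exchange is cleaner and slightly stronger output: disjointness of the interiors of the full cones (not just their slices in $K^1_{>>0}$), the explicit normalization $a/\mu(a)=b/\mu(b)$ rather than an implicit assumption $\mu(a)=\mu(b)=1$ as in the paper, and a statement (each interior point of a Voronoi cone recovers its perfect form as the unique Ryshkov minimizer) that is the natural van Woerden-style viewpoint and would be reusable elsewhere. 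Your facts (i) and (ii) are both correct as stated; for (ii) the usual argument is that if $p$ is interior one subtracts $\epsilon\sum_i v_i$ and re-expands, so every interior point has an all-positive representation.
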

\begin{proof}
Suppose that $c \in \text{Int}(\mathcal{V}^1(a)) \cap \text{Int}(\mathcal{V}^1(b))$, and $c \neq 0$. We want to show that if this holds, then $a=b$, which is a contradiction. Since $c \in \text{Int}(\mathcal{V}^1(a))$, there exist $\lambda_x \in \mathbb{Q}^+$ such that
\begin{align*}
c=\sum_{x \in \mathcal{M}(a)} \lambda_x x^2,
\end{align*}
so
\begin{align*}
\trace_{K/\mathbb{Q}}(bc)=\sum_{x \in \mathcal{M}(a)} \lambda_x \trace_{K/\mathbb{Q}}(bx^2) \geq \sum_{x \in \mathcal{M}(a)} \lambda_x=\sum_{x \in \mathcal{M}(a)}\lambda_x \trace_{K/\mathbb{Q}}(ax^2)=\trace_{K/\mathbb{Q}}(ac).
\end{align*}
Similarly, since $c \in \text{Int}(\mathcal{V}(b))$ we can deduce that $\trace_{K/\mathbb{Q}}(bc) \leq \trace_{K/\mathbb{Q}}(ac)$, so it must hold that $\trace_{K/\mathbb{Q}}(ac)=\trace_{K/\mathbb{Q}}(bc)$. Then
\begin{align*}
\trace_{K/\mathbb{Q}}((a-b)c)=\sum_{x \in \mathcal{M}(b)} \lambda_x(\trace_{K/\mathbb{Q}}(ax^2)-1)=0.
\end{align*}
Given that $\lambda_x>0$ for all $x \in \mathcal{M}(b)$, it holds that $\mathcal{M}(b) \subseteq \mathcal{M}(a)$. By a similar argument, we deduce that $\mathcal{M}(a) \subseteq \mathcal{M}(b)$, and so $\mathcal{M}(a)=\mathcal{M}(b)$. Since $\mu(a)=\mu(b)$, by the perfectness of $ax^2$, it must hold that $a=b$, which is a contradiction.
\end{proof}
Denote $\mathcal{F}_K^1=\mathcal{F}_K \cap K_{>>0}^1$. Then by the lemma above,
\begin{align*}
\vol(\mathcal{F}_K^1)=\vol\left(\bigcup_{a \in P_K} \mathcal{V}^1(a)\right)=\sum_{a \in P_K}\vol(\mathcal{V}^1(a)) \geq n_K \min_{a \in P_K} \vol(\mathcal{V}^1(a)),
\end{align*}
(here by volume, we mean the volume of the geometric object attained after canonically embedding into the space $\mathbb{R}^n$).
First, we will determine an upper bound for $\vol(\mathcal{F}_K^1)$. Let $T_n$ denote the set of elements $x$ in ${\mathbb{R}^+}^n$ such that $|x|_1 \leq 1$. Then
\begin{align*}
\vol(\mathcal{F}_K^1) \leq \vol(K_{>>0}^1) \leq \vol(T_n)=\frac{1}{n!},
\end{align*}
(since every element $a \in K_{>>0}$ will be mapped to ${\mathbb{R}^+}^n$ under the canonical embedding, by the definition of totally positive elements).

Before finding a lower bound for $\vol(\mathcal{V}^1(a))$, we will prove the following lemmas.
\begin{lemma}\label{lem1}
For all totally real, $A$-reducible fields $K$ with ring of integers $\mathcal{O}_K$ and unit group $\mathcal{O}_K^\times$ respectively,
\begin{align*}
\max_{a \in K_{>>0}}\left(\frac{\min_{u \in \mathcal{O}_K^\times}\trace_{K/\mathbb{Q}}(au^2)}{\mu(a)}\right)\leq e^{\sqrt{\eta_K^2+\theta_K}},
\end{align*}
where $\eta_K$ and $\theta_K$ are defined as in Theorem \ref{thm1}.
\end{lemma}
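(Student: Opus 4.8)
The plan is to bound, for a fixed totally positive $a$, the ratio $\min_{u\in\mathcal O_K^\times}\trace_{K/\mathbb Q}(au^2)$ against $\mu(a)$ by exhibiting a single unit $u$ for which $au^2$ is already nearly as small as some minimal vector realizes. Write $\mu(a)=\trace_{K/\mathbb Q}(ax_0^2)$ for some $x_0\in\mathcal M(a)$. Using the embeddings $\sigma_1,\dots,\sigma_n$, the quantity $\trace_{K/\mathbb Q}(ax^2)=\sum_i\sigma_i(a)\sigma_i(x)^2$ is a sum of positive reals, so the AM--GM inequality gives $\trace_{K/\mathbb Q}(ax^2)\geq n\bigl(\prod_i\sigma_i(a)\sigma_i(x)^2\bigr)^{1/n}=n\,|\norm_{K/\mathbb Q}(a)|^{1/n}\,|\norm_{K/\mathbb Q}(x)|^{2/n}$. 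Thus $\mu(a)\geq n|\norm_{K/\mathbb Q}(a)|^{1/n}$ (taking $x$ to be any minimal vector, since $|\norm(x)|\geq 1$), and more importantly, for \emph{any} $y\in\mathcal O_K\setminus\{0\}$ we have $\trace_{K/\mathbb Q}(ay^2)\leq \frac{\mu(a)}{n|\norm(a)|^{1/n}}\cdot(\text{something involving the log-embedding distortion of }y)$ once we separate out the norm term from the spread of the $\sigma_i(a)\sigma_i(y)^2$.

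The key structural step is to pass to logarithmic coordinates: set $\ell_i(z)=\log|\sigma_i(z)|$ and $L(z)=(\ell_1(z),\dots,\ell_n(z))$, which lives (up to the trace-zero constraint) in the logarithmic lattice of units $\Lambda=L(\mathcal O_K^\times)$, a lattice of rank $n-1$ in the hyperplane $\sum_i t_i=0$ with covolume $\sqrt n\,R_K$. Given $x_0\in\mathcal M(a)$, I want a unit $u$ such that $au^2$ is ``balanced'', i.e.\ the vector $L(a)+2L(u)$ is close to the point that makes all $\sigma_i(au^2)$ comparable to each other in the directions transverse to the all-ones vector; equivalently, $L(u)$ should be chosen to approximate $-\tfrac12(L(a)+2L(x_0))$ projected onto the trace-zero hyperplane, so that $au^2$ and $ax_0^2$ have the same norm but $au^2$ is close to ``round''. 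The covering radius of $\Lambda$ controls how well we can do this: there is a unit $u$ with $\|L(au^2)-(\text{target})\|_2\leq \mu(\Lambda)$, the covering radius, and the known bounds on $\mu(\Lambda)$ in terms of $n$ and $R_K$ (this is exactly where the case split $n\leq 11$ versus $n\geq 12$, and the awkward expressions for $\eta_K$, come from — these are the standard covering-radius-of-the-unit-lattice estimates, e.g.\ via Minkowski's second theorem or the results of Cerri/McMullen-type bounds on the Euclidean minimum). Once $L(au^2)$ is within distance $\eta_K$ (roughly) of a round point of the same norm as $ax_0^2$, converting back via $\trace(au^2)=\sum_i e^{2(L(au^2))_i}$ and comparing to $\mu(a)=\trace(ax_0^2)\geq n|\norm(ax_0^2)|^{1/n}$ gives $\trace_{K/\mathbb Q}(au^2)\leq \mu(a)\exp(\text{dist})$ after controlling $\sum e^{2t_i}$ in terms of $\sum t_i$ and $\|t\|_2$.

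The $A$-reducibility hypothesis enters precisely to handle the norm of $x_0$: in general $|\norm_{K/\mathbb Q}(x_0)|$ could be as large as $A$, which means the target round point has log-norm up to $\tfrac2n\log A$ away from what one would get if $x_0$ were a unit, and this contributes the additive $\theta_K=\tfrac{4\log(A)^2}{n-1}$ term inside the square root (the $n-1$ denominator being the dimension of the hyperplane over which the covering argument runs, and the squaring coming from the Euclidean-norm bookkeeping that gets combined with $\eta_K^2$ via a triangle/Pythagoras-type estimate $\sqrt{\eta_K^2+\theta_K}$ rather than $\eta_K+\sqrt{\theta_K}$). When $K$ is unit reducible, $A=1$ so $\theta_K=0$ and $x_0$ itself is (up to units) trivial, simplifying everything.

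The main obstacle I anticipate is making the logarithmic approximation argument quantitatively tight enough to land exactly the claimed bound $e^{\sqrt{\eta_K^2+\theta_K}}$: one has to choose the right ``target'' point (it is not simply the barycenter, because the norm of $x_0$ shifts it), invoke the correct covering-radius estimate with the right constant for each regime of $n$, and then control the passage from the additive $\ell_2$-bound on log-coordinates back to a multiplicative bound on the trace without losing factors — in particular justifying that $\sum_i e^{2t_i}$, subject to $\sum_i t_i$ fixed and $\|t\|_2\leq\delta$, is maximized/controlled so that the ratio to $n e^{(2/n)\sum t_i}$ is at most $e^{\delta}$ (or whatever the precise exponent is). I would structure the proof as: (i) AM--GM lower bound for $\mu(a)$ and the norm identity; (ii) setup of the log-lattice and its covolume/covering radius, citing the regime-dependent bounds that define $\eta_K$; (iii) choice of target point accounting for $|\norm(x_0)|\leq A$; (iv) the covering step producing the unit $u$; (v) the convexity estimate converting the $\ell_2$ log-bound back to the multiplicative trace bound, collecting $\eta_K$ and $\theta_K$ under one square root; and (vi) taking the max over $a$.
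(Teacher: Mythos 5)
Your outline shares the paper's skeleton --- pass to the log-unit lattice $\Lambda_K=\Log(\mathcal{O}_K^\times)$, invoke the regime-dependent covering-radius bounds that define $\eta_K$, bring in $A$-reducibility through $\norm_{K/\mathbb{Q}}(x_0)$, and combine the two contributions by a Pythagoras-type estimate --- but the pivotal step, your choice of target and the comparison back to $\mu(a)$, is specified incorrectly, and it is exactly the step you flag as delicate. If you take $2\Log(u)\approx -P\bigl(\Log(a)+2\Log(x_0)\bigr)$, where $P$ is the projection onto the trace-zero hyperplane, then $\Log(au^2)\approx \tfrac{1}{n}\log\bigl(|\norm_{K/\mathbb{Q}}(a)|\,\norm_{K/\mathbb{Q}}(x_0)^2\bigr)\mathbf{1}-2\Log(x_0)$: this is not ``round'' unless $x_0$ itself is balanced, and $\norm_{K/\mathbb{Q}}(au^2)=\norm_{K/\mathbb{Q}}(a)$ does not equal $\norm_{K/\mathbb{Q}}(ax_0^2)$ unless $x_0$ is a unit, so your convexity step (v) has nothing to compare against, and the claimed source of $\theta_K$ (``the norm of $x_0$ shifts the target'') does not materialize. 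The paper needs no round target at all: it chooses $u$ with $\Log(u^2)$ close to $\Log(x_0^2)$ and uses the one-line bound $\trace_{K/\mathbb{Q}}(au^2)\le\max_i\sigma_i(u^2/x_0^2)\,\trace_{K/\mathbb{Q}}(ax_0^2)$ together with $\max_i e^{|\log\sigma_i(u^2/x_0^2)|}\le e^{\|\Log(u^2)-\Log(x_0^2)\|_2}$; there, $A$-reducibility enters only to bound the component of $\Log(x_0^2)$ orthogonal to the unit hyperplane, of length $\tfrac{2\log|\norm_{K/\mathbb{Q}}(x_0)|}{\sqrt{n}}\le\tfrac{2\log A}{\sqrt{n}}$, and Pythagoras with the covering radius gives the exponent $\sqrt{\eta_K^2+\theta_K}$.

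The clean version of your balancing idea does work, but it proves something slightly different. Choose $u$ so that $2\Log(u)$ lies within distance $\delta$ of $-P(\Log(a))$; then every coordinate of $\Log(au^2)$ exceeds $\tfrac{1}{n}\log|\norm_{K/\mathbb{Q}}(a)|$ by at most $\delta$, so $\trace_{K/\mathbb{Q}}(au^2)\le n|\norm_{K/\mathbb{Q}}(a)|^{1/n}e^{\delta}$, while AM--GM gives $\mu(a)=\trace_{K/\mathbb{Q}}(ax_0^2)\ge n\bigl(|\norm_{K/\mathbb{Q}}(a)|\,\norm_{K/\mathbb{Q}}(x_0)^2\bigr)^{1/n}\ge n|\norm_{K/\mathbb{Q}}(a)|^{1/n}$, since $|\norm_{K/\mathbb{Q}}(x_0)|\ge 1$. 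Hence the ratio is at most $e^{\delta}$ with no $\theta_K$ and no $A$-reducibility hypothesis at all: the norm of $x_0$ only helps you, contrary to step (iii) of your outline. The price sits in $\delta$: the units enter through their squares, so the relevant lattice is $2\Lambda_K$ and $\delta$ is essentially $2\mu(\Lambda_K)$, so this route gives roughly $e^{2\eta_K}$ rather than $e^{\sqrt{\eta_K^2+\theta_K}}$ (the same factor of two deserves attention in the paper's own argument, which bounds the distance from $\Log(x_0^2)$ to $\Lambda_K$ although the set actually available is $\{\Log(u^2):u\in\mathcal{O}_K^\times\}=2\Lambda_K$). Also check your final convexity bookkeeping with the correct normalization ($\sum_i e^{t_i}$ with $t=\Log(au^2)$, not $\sum_i e^{2t_i}$), or yet another spurious factor of two appears. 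In short: right machinery, genuinely different comparison step, but as written the target point and the attribution of $\theta_K$ are wrong; repair it either by targeting $\Log(x_0^2)$ as the paper does, or by the pure balancing argument above at the cost of a different constant.
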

\begin{proof}
When $K$ is unit reducible, $\eta_K=0$ follows by definition. For non-unit reducible fields, suppose that $b \in K_{>>0}$ is such that
\begin{align*}
\max_{a \in K_{>>0}}\left(\frac{\min_{u \in \mathcal{O}_K^\times}\trace_{K/\mathbb{Q}}(au^2)}{\mu(a)}\right)=\frac{\min_{u \in \mathcal{O}_K^\times}\trace_{K/\mathbb{Q}}(bu^2)}{\mu(b)}.
\end{align*}
Suppose that $x \in \mathcal{M}(b)$, and associate to $K$ the embeddings $\sigma_1,\dots,\sigma_n$ that send $K$ to $\mathbb{R}$. Then
\begin{align*}
&\frac{\min_{u \in \mathcal{O}_K^\times}\trace_{K/\mathbb{Q}}(bu^2)}{\mu(b)}=\frac{\min_{u \in \mathcal{O}_K^\times}\trace_{K/\mathbb{Q}}\left(\frac{u^2}{x^2}bx^2\right)}{\mu(b)} \leq\frac{\min_{u \in \mathcal{O}_K^\times} \max_i \sigma_i\left(\frac{u^2}{x^2}\right)\trace_{K/\mathbb{Q}}(bx^2)}{\mu(b)}
\\&=\min_{u \in \mathcal{O}_K^\times} \max_i \sigma_i\left(\frac{u^2}{x^2}\right) = \min_{u \in \mathcal{O}_K} \max_i e^{|\log(\sigma_i(u^2/x^2))|}\leq \min_{u \in \mathcal{O}_K} e^{\sqrt{\sum_{i=1}^n(\log(|\sigma_i(u^2)|)-\log(|\sigma_i(x^2)|))^2}}.
\end{align*}
Consider the embedding $\Log: K \to \mathbb{R}^n$, $\Log(y)=(\log(|\sigma_1(y)|), \log(|\sigma_2(y)|),\dots,\log(|\sigma_n(y)|)$. Then $\Log(\mathcal{O}_K^\times)\triangleq \Lambda_K$ generates a lattice of rank $n-1$ in $\mathbb{R}^n$ (which is known as the log-unit lattice), such that, for any $(x_1,\dots,x_n) \in \Lambda_K$, we have $\sum_{i=1}^n x_i =0$. Hence, any element of $\Lambda_K$ lies in the plane $\sum_{i=1}^n x_i =0$.

Given that $K$ is $A$-reducible by assumption, $\Log(x^2)$ lies in the plane $\sum_{i=1}^n x_i = C$, where $0 \leq C \leq 2\log(A)$. Let $m$ denote the plane $\sum_{i=1}^n x_i=0$ that lies closest to $\Log(x^2)$. Then the maximum possible distance between $m$ and $\Log(x^2)$ is $\frac{2\log(A)}{\sqrt{n-1}}$, in terms of the $l_2$ norm in the space $\mathbb{R}^n$.

The distance between $m$ and any point of $\Lambda_K$, in terms of the $l_2$ norm in $\mathbb{R}^n$, cannot be greater than the covering radius of $\Lambda_K$ in $\mathbb{R}^{n-1}$, which we denote $\mu(\Lambda_K)$. Then, using Pythagoras's theorem, we get
\begin{align*}
    e^{\sqrt{\sum_{i=1}^n(\log(|\sigma_i(u^2)|)-\log(|\sigma_i(x^2)|))^2}} \leq e^{\sqrt{\mu(\Lambda_K)^2+\theta_K}}.
\end{align*}
It remains to find a bound on $\mu(\Lambda_K)$. In \cite{K2}, it was proven that for any lattice $\Lambda$ of rank $d$, $\mu(\Lambda) \leq \frac{\sqrt{d}}{2} |\det(\Lambda)|^{\frac{1}{d}}$ for $d \leq 10$. Since $|\det(\Lambda_K)|=R_K$, this yields the result stated in the lemma for $n \leq 11$.

Finally, we need to consider the case $d>11$. Denote $\lambda_i$ the $i$th successive minima of $\Lambda_K$, for $1 \leq i \leq n-1$. It is easily shown that
\begin{align}
    \mu(\Lambda_K) \leq \frac{\sqrt{n-1}}{2} \lambda_{n-1}. \label{2}
\end{align}
In \cite{K3}, it was proven that
\begin{align}
\lambda_i \geq \lambda_1 \geq \sqrt{\frac{2}{n}}\frac{1}{1000}\left(\frac{\log \log n}{\log n}\right)^3. \label{1}
\end{align}
By Minkowski's theorem, we have
\begin{align*}
 \prod_{i=1}^{n-1} \lambda_i \leq \gamma_{n-1}^{n-1} R_K,
\end{align*}
where $\gamma_n$ denotes Hemite's constant of rank $n$. By Blichfeldt \cite{B}, it is known that for any $n \geq 2$,
\begin{align*}
\gamma_n \leq \frac{2}{\pi}\Gamma\left(2+\frac{n}{2}\right)^{\frac{2}{n}},
\end{align*}
so combined with \ref{1},
\begin{align*}
\lambda_{n-1} \left(\sqrt{\frac{2}{n}}\frac{1}{1000}\left(\frac{\log \log n}{\log n}\right)^3\right)^{n-2} \leq \left(\frac{2}{\pi}\right)^{n-1}\Gamma\left(2+\frac{n-1}{2}\right)^2R_K,
\end{align*}
which yields the required result after rearranging the inequality above and using inequality \ref{2}.
\end{proof}
\begin{lemma}\label{lem2}
Suppose that $a \in K_{>>0}$ Then there exists an $a^\prime \in K_{>>0}$ such that $a \sim a^\prime$ and for all $x \in \mathcal{M}(a)$,
\begin{align*}
\trace_{K/\mathbb{Q}}(x^2) \leq e^{\sqrt{\eta_K^2+\theta_K}}\frac{4}{\pi^2} \Gamma\left(2+\frac{n}{2}\right)^{\frac{4}{n}}|\Delta_K|^{\frac{2}{n}},
\end{align*}
where $\eta_K$, $\theta_K$ are defined identically as in theorem $\ref{thm1}$.
\end{lemma}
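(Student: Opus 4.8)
\emph{Proof proposal.} The point is that although $\mathcal M(a)$ can contain long vectors when $a$ is badly unbalanced, we may replace $a$ by any equivalent form, and the correct replacement is prescribed by Lemma~\ref{lem1} applied to $a^{-1}$ rather than to $a$. First I would choose $v_0\in\mathcal O_K^\times$ achieving $\min_{v\in\mathcal O_K^\times}\trace_{K/\mathbb Q}(a^{-1}v^2)$ and set $a'=av_0^{-2}$. Then $a'\sim a$, and since the $\mathcal O_K^\times$-action fixes $\mathcal O_K$ one has $\mu(a')=\mu(a)$ and $\mathcal M(a')=v_0\,\mathcal M(a)$; moreover $(a')^{-1}=a^{-1}v_0^2$, so $\trace_{K/\mathbb Q}\bigl((a')^{-1}\bigr)=\min_{v}\trace_{K/\mathbb Q}(a^{-1}v^2)$, which by Lemma~\ref{lem1} (applicable since $a^{-1}\in K_{>>0}$) is at most $e^{\sqrt{\eta_K^2+\theta_K}}\mu(a^{-1})$. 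It is for this $a'$ that the bound will be established, for every $x\in\mathcal M(a')$ (which is surely what the statement intends, since $a'$ otherwise plays no role in the conclusion).

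The next step is a Cauchy--Schwarz estimate in the canonical embedding. For $x\in\mathcal M(a')$, factoring $\sigma_i(x)^2=\bigl(\sigma_i(a')^{1/2}\sigma_i(x)\bigr)\bigl(\sigma_i(a')^{-1/2}\sigma_i(x)\bigr)$ and applying Cauchy--Schwarz gives
\[
\trace_{K/\mathbb Q}(x^2)\le\trace_{K/\mathbb Q}(a'x^2)^{1/2}\,\trace_{K/\mathbb Q}\bigl((a')^{-1}x^2\bigr)^{1/2}=\mu(a)^{1/2}\,\trace_{K/\mathbb Q}\bigl((a')^{-1}x^2\bigr)^{1/2},
\]
using $x\in\mathcal M(a')$ and $\mu(a')=\mu(a)$. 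Since $\trace_{K/\mathbb Q}((a')^{-1}x^2)=\sum_i\sigma_i(a')^{-1}\sigma_i(x)^2\le\bigl(\max_j\sigma_j(x)^2\bigr)\trace_{K/\mathbb Q}((a')^{-1})\le\trace_{K/\mathbb Q}(x^2)\,\trace_{K/\mathbb Q}((a')^{-1})$, I can cancel a factor $\trace_{K/\mathbb Q}(x^2)^{1/2}>0$ and square to obtain
\[
\trace_{K/\mathbb Q}(x^2)\le\mu(a)\,\trace_{K/\mathbb Q}\bigl((a')^{-1}\bigr)\le e^{\sqrt{\eta_K^2+\theta_K}}\,\mu(a)\,\mu(a^{-1}).
\]

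It then remains to bound $\mu(a)\mu(a^{-1})$, which is where the discriminant enters via Minkowski's convex body theorem. The forms $x\mapsto\trace_{K/\mathbb Q}(ax^2)$ and $x\mapsto\trace_{K/\mathbb Q}(a^{-1}x^2)$ on $\mathcal O_K\cong\mathbb Z^n$ have Gram matrices $M^\top\diag(\sigma_i(a)^{\pm1})M$ with $\det(M)^2=\Delta_K$, hence Gram determinants $\norm_{K/\mathbb Q}(a)|\Delta_K|$ and $\norm_{K/\mathbb Q}(a)^{-1}|\Delta_K|$; Minkowski together with Blichfeldt's bound $\gamma_n\le\frac2\pi\Gamma(2+\frac n2)^{2/n}$ (already used in the proof of Lemma~\ref{lem1}) give $\mu(a)\le\frac2\pi\Gamma(2+\frac n2)^{2/n}(\norm_{K/\mathbb Q}(a)|\Delta_K|)^{1/n}$ and $\mu(a^{-1})\le\frac2\pi\Gamma(2+\frac n2)^{2/n}(|\Delta_K|/\norm_{K/\mathbb Q}(a))^{1/n}$. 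Multiplying these, $\norm_{K/\mathbb Q}(a)$ cancels and $\mu(a)\mu(a^{-1})\le\frac4{\pi^2}\Gamma(2+\frac n2)^{4/n}|\Delta_K|^{2/n}$; substituting into the previous display yields exactly the claimed inequality.

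I do not anticipate a genuine obstacle: the conceptual step — and the only non-routine one — is realising that Lemma~\ref{lem1} should be applied to $a^{-1}$, so that the two Minkowski estimates carry reciprocal determinants and combine with no leftover power of $\norm_{K/\mathbb Q}(a)$. The remaining points are the bookkeeping just described, together with the minor observations that $a^{-1}$ is totally positive (so Lemma~\ref{lem1} applies), that $x/v_0\in\mathcal O_K$ because $v_0$ is a unit (so that $\mathcal M(a')=v_0\mathcal M(a)$ and $\mu(a')=\mu(a)$ as used), and that the trace form of $a^{-1}$, while not integer-valued, is still a genuine positive-definite quadratic form on the lattice $\mathcal O_K$, so Minkowski's theorem applies to it verbatim.
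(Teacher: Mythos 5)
Your proposal is correct and takes essentially the same route as the paper: replace $a$ by an equivalent $a'$ so that Lemma \ref{lem1} applied to $a^{-1}$ controls $\trace_{K/\mathbb{Q}}((a')^{-1})$, deduce $\trace_{K/\mathbb{Q}}(x^2)\le\mu(a)\trace_{K/\mathbb{Q}}((a')^{-1})$, and bound $\mu(a)\mu(a^{-1})\le\frac{4}{\pi^2}\Gamma\left(2+\frac{n}{2}\right)^{4/n}|\Delta_K|^{2/n}$ via Blichfeldt's bound on the Hermite constant. Your only deviations are cosmetic --- Cauchy--Schwarz in place of the paper's eigenvalue comparison, and a direct Minkowski computation on the trace-form lattices instead of citing the bound $\gamma_K\le\gamma_n|\Delta_K|^{1/n}$ --- and your reading of the conclusion as applying to $x\in\mathcal{M}(a')$ matches the paper's intended use of the lemma.
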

\begin{proof}
By theorem 6 in \cite{L}, and by definition of the algebraic Hermite's constant $\gamma_K$, for any $a \in K_{>>0}$ it holds that
\begin{align*}
\mu(a) \leq \gamma_K \norm_{K/\mathbb{Q}}(a)^{\frac{1}{n}} \leq \gamma_n |\Delta_K|^{\frac{1}{n}} \norm_{K/\mathbb{Q}}(a)^{\frac{1}{n}} \leq \frac{2}{\pi} \Gamma\left(2+\frac{n}{2}\right)^{\frac{2}{n}}|\Delta_K|^{\frac{1}{n}} \norm_{K/\mathbb{Q}}(a)^{\frac{1}{n}}.
\end{align*}
Given that $a$ is totally positive, clearly $a^{-1}$ is also totally positive, and so repeating this argument, we get
\begin{align*}
\mu(a)\mu(a^{-1}) \leq \frac{4}{\pi^2} \Gamma\left(2+\frac{n}{2}\right)^{\frac{4}{n}}|\Delta_K|^{\frac{2}{n}}.
\end{align*}
Assume without loss of generality that $\mu(a)=1$, so
\begin{align*}
\mu(a^{-1}) \leq \frac{4}{\pi^2} \Gamma\left(2+\frac{n}{2}\right)^{\frac{4}{n}}|\Delta_K|^{\frac{2}{n}}.
\end{align*}
We may also assume without loss of generality that $a$ is such that
\begin{align*}
\trace_{K/\mathbb{Q}}(a^{-1}) \leq e^{\sqrt{\eta_k^2+\theta_K}}\mu(a^{-1}),
\end{align*}
as otherwise, by Lemma \ref{lem1}, there exists an $a^\prime$ such that this inequality holds and $a \sim a^\prime$. Hence,
\begin{align*}
\trace_{K/\mathbb{Q}}(a^{-1}) \leq e^{\sqrt{\eta_k^2+\theta_K}}\frac{4}{\pi^2} \Gamma\left(2+\frac{n}{2}\right)^{\frac{4}{n}}|\Delta_K|^{\frac{2}{n}}.
\end{align*}
Let $A$ denote the diagonal $n$-dimensional matrix with entries $\sigma_1(a),\sigma_2(a),\dots,\sigma_n(a)$, and denote $e_1,e_2,\dots,e_n$ the eigenvalues of $A$. Then
\begin{align*}
    \min_i e_i =\min_{\mathbf{y} \in \mathbb{R}^n}\frac{\mathbf{y}^T A \mathbf{y}}{\mathbf{y}^T \mathbf{y}} \leq \frac{\mathbf{x}^T A \mathbf{x}}{\mathbf{x}^T \mathbf{x}},
\end{align*}
where $\mathbf{x}=(\sigma_1(x),\sigma_2(x),\dots,\sigma_n(x))$, and $x \in \mathcal{M}(a)$. Note that $\trace_{K/\mathbb{Q}}(x^2)=\mathbf{x}^T\mathbf{x}$ and $\trace_{K/\mathbb{Q}}(ax^2)=\mathbf{x}^TA\mathbf{x}$, so
\begin{align*}
    &\trace_{K/\mathbb{Q}}(x^2) \leq \trace_{K/\mathbb{Q}}(ax^2) \max_i \frac{1}{e_i} \leq \trace_{K/\mathbb{Q}}(ax^2) \sum_{i=1}^n \frac{1}{e_i}=\mu(a)\trace_{K/\mathbb{Q}}(a^{-1})
    \\& =\trace_{K/\mathbb{Q}}(a^{-1}) \leq e^{\sqrt{\eta_k^2+\theta_K}}\frac{4}{\pi^2} \Gamma\left(2+\frac{n}{2}\right)^{\frac{4}{n}}|\Delta_K|^{\frac{2}{n}},
\end{align*}
as required.
\end{proof}
We are now ready to find a lower bound for $\mathcal{V}(a)$. Note that we can pick $n$ elements in $\mathcal{M}(a)$ (call this set $M_a$) such that their $\mathbb{Q}$-span is equal to $K$, and so the determinant of the polytope with edges $(\sigma_1(x^2),\dots,\sigma_n(x^2))$ is greater than or equal to $|\Delta_K|$, where $x \in M_a$. Say $M_a=\{x_1,\dots,x_n\}$. We may assume without loss of generality that $a$ is such that $\trace_{K/\mathbb{Q}}(x^2) \leq e^{\sqrt{\eta_K^2+\theta_K}}\frac{4}{\pi^2} \Gamma\left(2+\frac{n}{2}\right)^{\frac{4}{n}}|\Delta_K|^{\frac{2}{n}}$, for all $x \in \mathcal{M}(a)$, by Lemma \ref{lem2}. Then
\begin{align*}
\vol(\mathcal{V}^1(a)) \geq \vol\left(\text{conv}\left(\{0\} \cup \left\{\frac{\sigma(x_i^2)}{\trace_{K/\mathbb{Q}}(x_i^2)}\right\}_{i=1}^n\right)\right)=\frac{1}{n!}|\det(U)|,
\end{align*}
where
\begin{align*}
    U=\left\{\frac{\sigma(x_i^2)}{\trace_{K/\mathbb{Q}}(x_i^2)}\right\}_{i=1}^n,
\end{align*}
so
\begin{align*}
    &|\det(U)| =\prod_{i=1}^n \frac{1}{\trace_{K/\mathbb{Q}}(x_i^2)}\cdot |\det(\sigma(x_i^2))|_{i=1}^n \geq \frac{\Delta_K}{e^{n\sqrt{\eta_k^2+\theta_K}}\left(\frac{2}{\pi}\right)^{2n} \Gamma\left(2+\frac{n}{2}\right)^{4}|\Delta_K|^{2}}
    \\&=\frac{1}{e^{n\sqrt{\eta_k^2+\theta_K}}\left(\frac{2}{\pi}\right)^{2n} \Gamma\left(2+\frac{n}{2}\right)^{4}|\Delta_K|}.
\end{align*}
Finally, this yields
\begin{align*}
n_K &\leq \frac{\vol(\mathcal{F}_K^1}{\min_{a \in P_K}\vol(\mathcal{V}^1(a))} \leq \frac{\frac{1}{n!}}{\frac{1}{n!e^{n\sqrt{\eta_k^2+\theta_K}}\left(\frac{2}{\pi}\right)^{2n} \Gamma\left(2+\frac{n}{2}\right)^{4}|\Delta_K|}}\\&=e^{n\sqrt{\eta_K^2+\theta_K}}|\Delta_K|\left(\frac{2}{\pi}\right)^{2n}\Gamma\left(2+\frac{n}{2}\right)^4 ,
\end{align*}
which proves Theorem \ref{thm1}.
\section{Proof of Theorem \ref{thm2}}
The only result we need to prove Theorem \ref{thm2} is the following lemma.
\begin{lemma}
Let $K$ be a totally real field and suppose that $a \in K_{>>0}$. Then 
\begin{align*}
\norm_{K/\mathbb{Q}}(x) \leq n^{-\frac{n}{2}}\sqrt{|\Delta_K|}\left(\frac{2}{\pi}\right)^{\frac{n}{2}}\Gamma\left(2+\frac{n}{2}\right).
\end{align*}
\end{lemma}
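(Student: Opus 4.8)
The plan is to read the statement with $x$ ranging over $\mathcal{M}(a)$ (this is the case needed for $A$-reducibility), and then to combine the arithmetic--geometric mean inequality applied to the conjugates of $ax^{2}$ with the Hermite-constant estimate for $\mu(a)$ that already appeared in the proof of Lemma~\ref{lem2}. So I would fix $x \in \mathcal{M}(a)$; in particular $x \in \mathcal{O}_K \setminus \{0\}$ and $\trace_{K/\mathbb{Q}}(ax^{2}) = \mu(a)$.

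Since $a$ is totally positive and $x \neq 0$, each conjugate $\sigma_i(ax^{2}) = \sigma_i(a)\sigma_i(x)^{2}$ is a strictly positive real, and $\sum_{i=1}^{n}\sigma_i(ax^{2}) = \trace_{K/\mathbb{Q}}(ax^{2}) = \mu(a)$. Applying AM--GM to these $n$ numbers gives
\begin{align*}
\norm_{K/\mathbb{Q}}(a)\,\norm_{K/\mathbb{Q}}(x)^{2} = \prod_{i=1}^{n}\sigma_i(ax^{2}) \le \left(\frac{1}{n}\sum_{i=1}^{n}\sigma_i(ax^{2})\right)^{n} = \left(\frac{\mu(a)}{n}\right)^{n},
\end{align*}
so $\norm_{K/\mathbb{Q}}(x)^{2} \le \mu(a)^{n} / (n^{n}\,\norm_{K/\mathbb{Q}}(a))$. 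Next I would insert the bound $\mu(a) \le \frac{2}{\pi}\Gamma\!\left(2+\frac{n}{2}\right)^{2/n}|\Delta_K|^{1/n}\norm_{K/\mathbb{Q}}(a)^{1/n}$, which is exactly the chain of inequalities (Theorem~6 of \cite{L}, the definitional bound $\gamma_K \le \gamma_n|\Delta_K|^{1/n}$, and Blichfeldt's $\gamma_n \le \frac{2}{\pi}\Gamma(2+\frac n2)^{2/n}$ from \cite{B}) used at the start of the proof of Lemma~\ref{lem2}. Raising it to the $n$th power and substituting, the factor $\norm_{K/\mathbb{Q}}(a)$ cancels and one is left with
\begin{align*}
\norm_{K/\mathbb{Q}}(x)^{2} \le \frac{1}{n^{n}}\left(\frac{2}{\pi}\right)^{n}\Gamma\!\left(2+\frac{n}{2}\right)^{2}|\Delta_K|,
\end{align*}
and taking square roots gives the claimed inequality (with $|\norm_{K/\mathbb{Q}}(x)|$ on the left, which is what matters for $A$-reducibility).

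There is no genuinely hard step here. The only points that need care are: making explicit that $x$ is a minimal vector of the trace form, so that $\trace_{K/\mathbb{Q}}(ax^{2})=\mu(a)$ is available; noting that the positivity hypothesis for AM--GM is precisely the total positivity of $a$; and observing that, because $x \in \mathcal{O}_K \setminus \{0\}$, its norm is a nonzero rational integer, so this bound is exactly of the shape needed to conclude that every totally real $K$ is $A$-reducible with $A \le n^{-n/2}\sqrt{|\Delta_K|}\left(\frac{2}{\pi}\right)^{n/2}\Gamma\!\left(2+\frac n2\right)$, which is then fed into Theorem~\ref{thm1} to obtain Theorem~\ref{thm2}.
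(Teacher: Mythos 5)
Your proposal is correct and follows essentially the same route as the paper's proof: apply AM--GM to the conjugates of $ax^{2}$ with $x \in \mathcal{M}(a)$, bound $\mu(a)$ via the algebraic Hermite constant together with Blichfeldt's estimate, and cancel $\norm_{K/\mathbb{Q}}(a)$. If anything, your version is slightly cleaner, since you take the square root only at the very end, whereas the paper's displayed chain passes from $n^{-n}\gamma_n^{n}|\Delta_K|\norm_{K/\mathbb{Q}}(a)$ directly to the square-rooted quantity, which is only legitimate once one remembers the left-hand side is $\norm_{K/\mathbb{Q}}(x)^{2}$ times $\norm_{K/\mathbb{Q}}(a)$.
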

\begin{proof}
Suppose that $x \in \mathcal{M}(a)$. Then, using the arithmetic-geometric inequality, the definition of the algebraic Hermite's constant and Leibak/Blichfeldt's bounds,
\begin{align*}
&\norm_{K/\mathbb{Q}}(a)\norm_{K/\mathbb{Q}}(x^2) =\norm_{K/\mathbb{Q}}(ax^2) \leq \left(\frac{1}{n}\trace_{K/\mathbb{Q}}(ax^2)\right)^n \leq \frac{1}{n^n} \left(\gamma_K \norm_{K/\mathbb{Q}}(a)^{\frac{1}{n}}\right)^n \\&\leq n^{-n} \gamma_n^n |\Delta_K| \norm_{K/\mathbb{Q}}(a) \leq n^{-\frac{n}{2}}\sqrt{|\Delta_K|}\left(\frac{2}{\pi}\right)^{\frac{n}{2}}\Gamma\left(2+\frac{n}{2}\right) \norm_{K/\mathbb{Q}}(a).
\end{align*}
The result follows by dividing through by $\norm_{K/\mathbb{Q}}(a)$.
\end{proof}
Theorem \ref{thm2} follows immediately, as we now have an upper bound for $\theta_K$.

\end{document}